\theoremstyle{thmstyleone}%
\newtheorem{theorem}{Theorem}
\theoremstyle{thmstyletwo}%
\newtheorem{remark}{Remark}%
\newtheorem{lemma}{Lemma}%
\theoremstyle{thmstylethree}%
\begin{document}

\title[On the exactness of the Kiguradze--Kvinikadze blow-up condition]{On the exactness of the Kiguradze--Kvinikadze blow-up condition for nonlinear ordinary differential equations}








\author*[1]{\fnm{A. A.} \sur{Kon'kov}}\email{konkov@mech.math.msu.su}

\affil*[1]{\orgdiv{Department of Differential Equations}, \orgname{Faculty of Mechanics and Mathematics, Lo\-mo\-no\-sov Mo\-s\-cow State University}, \orgaddress{\street{Vorobyovy Gory}, \city{Moscow}, \postcode{119991}, \country{Russia}}}


\abstract{
We show the exactness of the Kiguradze--Kvinikadze blow-up condition for solutions of the Cauchy problem
$$
	w^{(m)}
	=
	f (t, w, \ldots, w^{(m - 1)}),
	\qquad
	w^{(i)} (0) = a_i \ge 0,
	\quad
	i = 0, \ldots, m - 1.
$$
}

\keywords{Ordinary differential equations, Global solutions, Nonlinearity, Blow-up}



\maketitle

\section{Introduction}\label{sec1}

We consider the differential equation
\begin{equation}
	w^{(m)}
	=
	f (t, w, \ldots, w^{(m - 1)}),
	\label{1.1}
\end{equation}
of order $m \ge 1$, where $f$ is a non-negative locally Caratheodory function~\cite{KK, KCbook}.
As is customary, a function $w : [0, T) \to {\mathbb R}$, $0 < T \le \infty$, is called a solution of~\eqref{1.1} on the interval $[0, T)$ if its derivatives $w^{(i)}$, $i = 0, \ldots, m - 1$, are locally absolutely continuous functions on $[0, T)$ and, moreover, equation~\eqref{1.1} holds for almost all $t \in [0, T)$. 
By a global solution of~\eqref{1.1} we mean a solution of this equation on the whole interval $[0, \infty)$. 
We also assume that solutions of~\eqref{1.1} satisfy the initial conditions
\begin{equation}
	w^{(i)} (0) = a_i \ge 0,
	\quad
	i = 0, \ldots, m - 1.
	\label{1.2}
\end{equation}

In paper~\cite[Theorem~1.2]{KK}, under the assumption that 
\begin{equation}
	f (t, x_0, \ldots, x_{m - 1})
	\ge
	q (t)
	h (x_k)
	\label{1.3}
\end{equation}
for almost all $t \in [0, \infty)$ and for all $x_i \in [0, \infty)$, $i = 0, \ldots, m - 1$, where $0 \le k \le m - 1$ is an integer, ${q : [0, \infty) \to [0, \infty)}$ is a locally integrable function, and ${h : [0, \infty) \to [0, \infty)}$ is a non-decreasing continuous function positive on the interval $(0, \infty)$,
it was proved that the condition
\begin{equation}
	\int_1^\infty
	h^{
		- 1 / (m - k)
	}
	(s)
	s^{
		1 / (m - k) - 1
	}
	ds
	<
	\infty
	\label{1.4}
\end{equation}
implies the absence of global solutions of problem~\eqref{1.1}, \eqref{1.2} for all sufficiently large initial values.
In so doing, 
it was required that
\begin{equation}
	\operatorname{mes} \{ \tau > t : q (\tau) > 0 \} > 0 
	\label{1.5}
\end{equation}
for all $t \ge 0$.
Replacing~\eqref{1.5} by a stronger condition, one can show that~\eqref{1.4} guarantees the absence of solutions of problem~\eqref{1.1}, \eqref{1.2} for all positive initial values too~\cite{meIzv, meJMAA, meJMS}.

For power-law nonlinearities $h (s) = s^\lambda$, condition~\eqref{1.4} takes the simple form $\lambda > 1$. 
The case of such nonlinearities are
studied in sufficient detail~\cite{KCbook, Ast2019, Ast2017, Ast2014, CDKM, I, IR, K, MOG_2008, MOG_2005}.
In particular, it is well-known that if the reverse inequality holds in~\eqref{1.3} with $h (s) = s^\lambda$, where $0 \le \lambda \le 1$ and $q$ is a non-negative locally integrable function, then problem~\eqref{1.1}, \eqref{1.2} has a global solution for all non-negative initial values.
It seems plausible that a similar statement should be true for the general nonlinearity. 
Theorem~\ref{t2.1} and~\ref{t2.2} proved in our paper testifies to the validity of this hypothesis.

\section{Main results}

\begin{theorem}\label{t2.1}
Let the right-hand side of equation~\eqref{1.1} satisfy the inequality
\begin{equation}
	f (t, x_0, \ldots, x_{m - 1})
	\le
	q (t)
	h (x_k)
	\label{t2.1.1}
\end{equation}
for almost all $t \in [0, \infty)$ and for all $x_i \in [0, \infty)$, $i = 0, \ldots, m - 1$, 
where $0 \le k \le m - 1$ is an integer, ${q : [0, \infty) \to [0, \infty)}$ is a locally integrable function, and ${h : [0, \infty) \to [0, \infty)}$ is a non-decreasing continuous function 
such that $h (s) > 0$ for all $s \in (0, \infty)$.
If 
\begin{equation}
	\int_1^\infty
	h^{
		- 1 / (m - k)
	}
	(s)
	s^{
		1 / (m - k) - 1
	}
	ds
	=
	\infty,
	\label{t2.1.2}
\end{equation}
then any solution of problem~\eqref{1.1}, \eqref{1.2} can be extended to the whole interval $[0, \infty)$.
\end{theorem}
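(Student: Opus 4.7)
The strategy is an approximation plus compactness argument, driven by an a priori estimate calibrated exactly to condition~\eqref{t2.1.2}.

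\textbf{Approximation and reduction.} For each integer $N \ge 1$ let $f_N := \min(f, N)$. Since $f_N$ is a bounded, nonnegative, locally Caratheodory function, standard theory supplies a global solution $w_N$ of $w_N^{(m)} = f_N(t, w_N, \ldots, w_N^{(m - 1)})$ satisfying~\eqref{1.2}. Because $f_N \ge 0$, each $w_N^{(j)}$ with $j = 0, \ldots, m - 1$ is nondecreasing and nonnegative. Setting $n := m - k$ and $V_N := w_N^{(k)}$, the bound~\eqref{t2.1.1} yields $V_N^{(n)}(t) \le q(t) h(V_N(t))$ on $[0, \infty)$, while $V_N, V_N', \ldots, V_N^{(n - 1)}$ are all nonnegative and nondecreasing.

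\textbf{A priori estimate; the main obstacle.} The crux is to show that $\sup_{N}\sup_{t \in [0, T]} V_N(t) < \infty$ for every $T > 0$. Integrating the inequality $V_N^{(n)} \le q\, h(V_N)$ exactly $n$ times gives
\begin{equation*}
V_N(t) \le P(t) + \int_0^t \frac{(t - s)^{n - 1}}{(n - 1)!}\, q(s) h(V_N(s))\, ds,
\end{equation*}
where $P$ is the Taylor polynomial of degree $n-1$ built from the initial data. A naive Bihari-type argument applied to this scalar inequality would only close under $\int^{\infty} ds/h(s) = \infty$, which is strictly weaker than~\eqref{t2.1.2}, so a finer argument is required. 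The sharp exponent $1/(m - k)$ in~\eqref{t2.1.2} must be recovered by exploiting the joint monotonicity of \emph{all} of $V_N, V_N', \ldots, V_N^{(n - 1)}$, not just of $V_N$. To this end I would use a Kiguradze-type interpolation of the form
\begin{equation*}
V_N(t) \ge c_n\, t^{n - 1}\, V_N^{(n - 1)}(t / 2^{n - 1}),
\end{equation*}
obtained by iterating the elementary estimate $V_N^{(r)}(t) - V_N^{(r)}(t/2) \ge (t/2)\, V_N^{(r + 1)}(t/2)$ coming from the monotonicity of each derivative. Combining this with the one-step bound $V_N^{(n - 1)}(t) \le V_N^{(n - 1)}(0) + \int_0^t q(s) h(V_N(s))\, ds$ should produce a first-order differential inequality for $G(V_N(t))$, where
\begin{equation*}
G(u) := \int_{u_0}^{u} h^{-1/(m - k)}(s)\, s^{1/(m - k) - 1}\, ds.
\end{equation*}
Condition~\eqref{t2.1.2} is precisely the statement $G(\infty) = \infty$, so this inequality forbids finite-time blow-up of $V_N$ and produces a bound $V_N(t) \le G^{-1}(\Psi(t))$ with $\Psi$ continuous and finite on $[0, \infty)$, independent of $N$.

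\textbf{Passage to the limit.} The bound on $V_N = w_N^{(k)}$ propagates to uniform bounds on each $w_N^{(j)}$, $j = 0, \ldots, m - 1$, on every $[0, T]$: for $j < k$ via successive integration from the initial data, and for $j > k$ via integration of $w_N^{(m)} \le q\, h(V_N)$. Hence $\{w_N^{(m - 1)}\}$ is uniformly bounded and equicontinuous on $[0, T]$. Arzela--Ascoli then extracts a subsequence $\{w_{N_{\ell}}\}$ with $w_{N_{\ell}}^{(j)} \to w^{(j)}$ uniformly on compacta for $j = 0, \ldots, m - 1$, and dominated convergence (with integrable majorant $q(s) h(M(T))$) lets me pass to the limit in the integral form of $w_N^{(m)} = f_N(t, w_N, \ldots, w_N^{(m-1)})$, yielding a global solution $w$ of~\eqref{1.1},~\eqref{1.2}.
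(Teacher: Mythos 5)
Your overall architecture (truncate $f$ to $f_N=\min(f,N)$, solve globally, prove an $N$-independent a priori bound, pass to the limit by Arzel\`a--Ascoli) is a legitimate alternative to the paper's route, which instead reduces to the majorant problem $v^{(m)}=q(t)h(v^{(k)})$ via Schauder's fixed point theorem and then proves global solvability of that problem directly. Your reduction and your limit-passage steps are sound. The problem is that the step you yourself identify as ``the crux'' --- the a priori bound under the sharp hypothesis~\eqref{t2.1.2} --- is not actually proved: you write that the interpolation combined with the one-step bound ``should produce a first-order differential inequality for $G(V_N(t))$,'' and that is precisely where the entire mathematical content of the theorem lives.

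Concretely, two things go wrong in the sketch. First, the Kiguradze interpolation $V_N(t)\ge c_n t^{\,n-1}V_N^{(n-1)}(t/2^{\,n-1})$ gives an \emph{upper} bound on $V_N^{(n-1)}$ in terms of $V_N$, whereas to control $\frac{d}{dt}G(V_N)$ you need an upper bound on $V_N'$ of the form $V_N'\le \psi(t)\,h^{1/n}(V_N)\,V_N^{1-1/n}$ with $\psi$ locally integrable; no route from your two displayed inequalities to such a bound is indicated, and the naive one ($V_N'\le V_N'(0)+Ct^{\,n-2}h(V_N(t))\int_0^t q$) only recovers the weaker condition $\int ds/h(s)=\infty$, as you note. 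Second, the weight $q$ is only locally integrable, so $V_N^{(n)}=q\,h(V_N)$ need not be bounded on $[0,T]$; this defeats the sup-norm Kolmogorov--Landau/energy interpolations that produce the exponent $1/n$ in the autonomous case. This difficulty is exactly why the paper splits the argument: Lemma~\ref{l2.3} handles the autonomous equation $v^{(n)}=h(v)$ (itself requiring the delicate induction of Lemma~\ref{l2.2}), and the proof of Theorem~\ref{t2.1} then transfers the conclusion to $u^{(n)}=q(t)h(u)$ by the time reparametrization $\tau_j=\tau_{j-1}+t_j-t_{j-1}+\int_{t_{j-1}}^{t_j}q\,dt$, which absorbs the mass of $q$ into a bounded shift of the time variable. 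Some device of this kind (or a genuinely new interpolation valid for $L^1_{\mathrm{loc}}$ weights) is needed to close your a priori estimate; as written, the proposal has a gap at its central step.
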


\begin{remark}\label{r2.1}
From~\eqref{t2.1.2}, it obviously follows that
$$
	\int_1^\infty
	h^{
		- 1 / (m - k)
	}
	(\alpha \zeta)
	\zeta^{
		1 / (m - k) - 1
	}
	d \zeta
	=
	\infty
$$
for any real number $\alpha > 0$. To verify this, it is sufficient to make the change of variables $s = \alpha \zeta$ under the integral sign in the left-hand side of~\eqref{t2.1.2}.
\end{remark}

\begin{theorem}\label{t2.2}
Let the conditions of Theorem~\ref{t2.1} be fulfilled. Then problem~\eqref{1.1}, \eqref{1.2} has a global solution for all initial values.
\end{theorem}

To prove Theorems~\ref{t2.1} and~\ref{t2.2}, we need the following statements.

\begin{lemma}\label{l2.2}
Let $u$ be a positive solution of the equation
\begin{equation}
	\frac{
		d
	}{
		d t
	}
	u^{1 / n} (t)
	= 
	g^{1 / n} (u)
	\label{l2.2.1}
\end{equation}
on the interval $[0, T)$, 
where $n \ge 1$ is an integer and $g : (0, \infty) \to (0, \infty)$  is a non-decreasing continuous function.
Then there exist real numbers $\alpha > 0$ and $\beta > 0$ depending only on $n$ such that
\begin{equation}
	u (t) - u (0)
	\ge
	\alpha
	\int_0^t
	(t - \tau)^{n - 1}
	g (\beta u)
	d \tau
	\label{l2.2.2}
\end{equation}
for all $t \in [0, T)$.
\end{lemma}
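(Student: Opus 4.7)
The plan is to bound the right-hand integral of~\eqref{l2.2.2} by $(u(t)-u(0))/n$, which immediately gives~\eqref{l2.2.2} with $\alpha = n$ and $\beta = 1$. Three ingredients are needed: the monotonicity of $u$, a pointwise ``reverse'' estimate relating $t - \tau$ to $u^{1/n}(t) - u^{1/n}(\tau)$, and the elementary comparison $(a - b)^n \le a^n - b^n$ for $0 \le b \le a$.

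First I would observe that $u$ is strictly increasing on $[0, T)$, since $(u^{1/n})'(\tau) = g^{1/n}(u(\tau)) > 0$; in particular, the function $\sigma \mapsto g^{1/n}(u(\sigma))$ is non-decreasing. Integrating~\eqref{l2.2.1} from $\tau$ to $t$ and using this monotonicity yields
$$
	u^{1/n}(t) - u^{1/n}(\tau)
	\ge
	(t - \tau)\, g^{1/n}(u(\tau)),
$$
which, raised to the $(n-1)$-th power, becomes the key pointwise bound
$$
	(t - \tau)^{n - 1} g^{(n - 1)/n}(u(\tau))
	\le
	\bigl( u^{1/n}(t) - u^{1/n}(\tau) \bigr)^{n - 1}.
$$

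Multiplying this inequality by $g^{1/n}(u(\tau)) = (u^{1/n})'(\tau)$, integrating in $\tau \in [0, t]$, and then carrying out the substitution $w = u^{1/n}(\tau)$ evaluates the resulting integral exactly:
$$
	\int_0^t (t - \tau)^{n - 1} g(u(\tau))\, d\tau
	\le
	\int_{u^{1/n}(0)}^{u^{1/n}(t)} \bigl( u^{1/n}(t) - w \bigr)^{n - 1} dw
	=
	\frac{\bigl( u^{1/n}(t) - u^{1/n}(0) \bigr)^n}{n}.
$$
The elementary inequality $(a - b)^n \le a^n - b^n$, which follows from $a^n - b^n = (a - b)(a^{n - 1} + \cdots + b^{n - 1}) \ge (a - b)\, a^{n - 1} \ge (a - b)^n$ for $0 \le b \le a$, applied with $a = u^{1/n}(t)$ and $b = u^{1/n}(0)$, gives $\bigl( u^{1/n}(t) - u^{1/n}(0) \bigr)^n \le u(t) - u(0)$, so~\eqref{l2.2.2} holds with $\alpha = n$ and $\beta = 1$.

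The main obstacle is spotting the correct way to trade $(t - \tau)^{n - 1}$ for powers of $u^{1/n}(t) - u^{1/n}(\tau)$; without this idea one is tempted to iterate~\eqref{l2.2.1} or to estimate the inverse function of $u$ crudely, producing an unnecessary $\beta < 1$. Once the pointwise inequality above is in hand, the argument reduces to a single change of variables together with a classical power-of-a-difference inequality, and the constants depend only on $n$ as required.
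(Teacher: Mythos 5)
Your proof is correct, and it takes a genuinely different and in fact simpler route than the paper's. The paper proves the lemma by induction on the smallest integer $l$ with $u(0)\ge\beta^l u(t)$: it splits the interval at the point $t_1$ where $u(t_1)=\beta u(t)$ (or at $t/2$), compares the tail of the integral with $u(t)-u(t_1)$ via the crude bound $u(t)-u(t_1)\ge (t-t_1)^n g(u(t_1))$, and handles the remaining piece by the induction hypothesis; this forces the loss constants $\alpha=2^{-(n+1)}$ and $\beta=(1+2^n)^{-1}$. You instead observe that $u^{1/n}(t)-u^{1/n}(\tau)\ge (t-\tau)\,g^{1/n}(u(\tau))$ pointwise, multiply by $(u^{1/n})'(\tau)=g^{1/n}(u(\tau))$ after raising to the $(n-1)$-st power, and evaluate the resulting integral exactly by the substitution $w=u^{1/n}(\tau)$; together with $(a-b)^n\le a^n-b^n$ for $0\le b\le a$ this gives the clean estimate
$$
\int_0^t(t-\tau)^{n-1}g(u)\,d\tau\le\frac{\bigl(u^{1/n}(t)-u^{1/n}(0)\bigr)^n}{n}\le\frac{u(t)-u(0)}{n},
$$
i.e.\ the lemma with the stronger constants $\alpha=n$ and $\beta=1$ (sharp for $n=1$, and sharp as $u(0)\to0$ for constant $g$). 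Every step checks out: $u$ is strictly increasing so $g^{1/n}(u(\cdot))$ is non-decreasing, the change of variables is legitimate since $u^{1/n}$ is $C^1$, and the improved constants are harmless for the subsequent application in Lemma~\ref{l2.3}, where $\alpha$ and $\beta$ are simply absorbed into the definition of $g$. Your version avoids the induction entirely and yields a quantitatively better result.
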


\begin{proof}
Integrating~\eqref{l2.2.1}, we have
$$
	u (t) 
	=
	\left(
		u^{1 / n} (0)
		+
		\int_0^t
		g^{1 / n} (u)
		d \tau
	\right)^n
$$
for all $t \in [0, \infty)$, whence it follows that
\begin{equation}
	u (t) - u (t_1)
	\ge
	\left(
		\int_{t_1}^{t}
		g^{1 / n} (u)
		d \tau
	\right)^n
	\ge
	(t - t_1)^n
	g (u (t_1))
	\label{pl2.2.1}
\end{equation}
for all real numbers $0 \le t_1 \le t$.
Put
$$
	\alpha 
	= 
	\frac{
		1
	}{
		2^{n + 1}
	} 
	\quad
	\mbox{and}
	\quad
	\beta 
	= 
	\frac{
		1
	}{
		1 + 2^n
	}.
$$

Let $0 < t < T$ be some given real number and $l \ge 1$ be the minimal integer such that $u (0) \ge \beta^l u (t)$.
We prove~\eqref{l2.2.2} by induction on $l$.
At fist, assume that $l = 1$. It is obvious that
$$
	\int_{t / 2}^t
	(t - \tau)^{n - 1}
	g (\beta u)
	d \tau
	\le
	\frac{1}{n}
	\left(
		\frac{t}{2}
	\right)^n
	g (\beta u (t)).
$$
In so doing,~\eqref{pl2.2.1} implies the estimate
$$
	u (t) - u (t / 2)
	\ge
	\left(
		\frac{t}{2}
	\right)^n
	g (u (t / 2))
	\ge
	\left(
		\frac{t}{2}
	\right)^n
	g (\beta u (t)).
$$
Hence,
\begin{equation}
	u (t) - u (t / 2)
	\ge
	\int_{t / 2}^t
	(t - \tau)^{n - 1}
	g (\beta u)
	d \tau.
	\label{pl2.2.2}
\end{equation}
On the other hand, from the monotonicity of the function $g$, it follows that
$$
	\int_{t / 2}^t
	(t - \tau)^{n - 1}
	g (\beta u)
	d \tau
	\ge
	\frac{1}{n}
	\left(
		\frac{t}{2}
	\right)^n
	g (\beta u (t / 2))
	\ge
	\frac{1}{2^n}
	\int_0^{t / 2}
	(t - \tau)^{n - 1}
	g (\beta u)
	d \tau;
$$
therefore,
$$
	\int_{t / 2}^t
	(t - \tau)^{n - 1}
	g (\beta u)
	d \tau
	\ge
	\frac{
		1
	}{
		1 + 2^n
	}
	\int_0^t
	(t - \tau)^{n - 1}
	g (\beta u)
	d \tau.
$$
Combining this with~\eqref{pl2.2.2}, we obtain
\begin{equation}
	u (t) - u (t / 2)
	\ge
	\frac{
		1
	}{
		1 + 2^n
	}
	\int_0^t
	(t - \tau)^{n - 1}
	g (\beta u)
	d \tau,
	\label{PT2.3.3}
\end{equation}
whence~\eqref{l2.2.2} follows at once.

Now, assume that~\eqref{l2.2.2} is valid for some $l = l_0$, where $l_0$ is a positive integer. Let us show that this inequality is also valid for $l = l_0 + 1$. Take a real number $0 < t_1 < t$ satisfying the condition $u (t_1) = \beta u (t)$.
If $2 t_1 \le t$, then repeating the previous reasoning, we again arrive at~\eqref{PT2.3.3} and, consequently, at~\eqref{l2.2.2}. Consider the case of $2 t_1 > t$. By the induction hypothesis, we have
\begin{equation}
	u (t_1) - u (0)
	\ge
	\alpha
	\int_0^{t_1}
	(t_1 - \tau)^{n - 1}
	g (\beta u)
	d \tau.
	\label{pl2.2.4}
\end{equation}
Let us put $t_* = 2 t_1 - t$.
Since $2 (t_1 - \tau) \ge t - \tau$ for all $\tau \in (0, t_*)$, one can assert that
$$
	\int_0^{t_1}
	(t_1 - \tau)^{n - 1}
	g (\beta u)
	d \tau
	\ge
	\frac{
		1
	}{
		2^{n - 1}
	}
	\int_0^{t_*}
	(t - \tau)^{n - 1}
	g (\beta u)
	d \tau;
$$
therefore,~\eqref{pl2.2.4} implies the estimate
$$
	u (t_1)
	\ge
	\frac{
		\alpha
	}{
		2^{n - 1}
	}
	\int_0^{t_*}
	(t - \tau)^{n - 1}
	g (\beta u)
	d \tau.
$$
In view of the equality $u (t) - u (t_1) = (1 / \beta - 1) u (t_1)$, this yields
\begin{align}
	u (t) - u (t_1)
	&
	\ge
	\frac{
		\alpha
	}{
		2^{n - 1}
	}
	\left(
		\frac{1}{\beta}
		-
		1
	\right)
	\int_0^{t_*}
	(t - \tau)^{n - 1}
	g (\beta u)
	d \tau
	\nonumber
	\\
	&
	=
	2
	\alpha
	\int_0^{t_*}
	(t - \tau)^{n - 1}
	g (\beta u)
	d \tau.
	\label{PT2.3.5}
\end{align}
From~\eqref{pl2.2.1}, it follows that
$$
	u (t) - u (t_1)
	\ge
	(t - t_1)^n
	g (u (t_1))
	=
	(t - t_1)^n
	g (\beta u (t)).
$$
At the same time,
$$
	\int_{t_*}^t
	(t - \tau)^{n - 1}
	g (\beta u)
	d \tau
	\le
	(t - t_*)^n
	g (\beta u (t))
	=
	2^n
	(t - t_1)^n
	g (\beta u (t)).
$$
Thus, one can assert that
$$
	u (t) - u (t_1)
	\ge
	\frac{1}{2^n}
	\int_{t_*}^t
	(t - \tau)^{n - 1}
	g (\beta u)
	d \tau
	=
	2 
	\alpha
	\int_{t_*}^t
	(t - \tau)^{n - 1}
	g (\beta u)
	d \tau.
$$
Summing the last expression with~\eqref{PT2.3.5}, we arrive at the estimate
$$
	u (t) - u (t_1)
	\ge
	\alpha
	\int_0^t
	(t - \tau)^{n - 1}
	g (\beta u)
	d \tau,
$$
whence~\eqref{l2.2.2} obviously follows.
\end{proof}

\begin{lemma}\label{l2.3}
Let $h : (0, \infty) \to (0, \infty)$ be a non-decreasing continuous function such that
\begin{equation}
	\int_1^\infty
	h^{
		- 1 / n
	}
	(s)
	s^{
		1 / n - 1
	}
	ds
	=
	\infty,
	\label{l2.3.1}
\end{equation}
where $n \ge 1$ is an integer.
Then the Cauchy problem
\begin{equation}
	v^{(n)} = h (v),
	\qquad
	v^{(i)} (0) = b_i > 0,
	\quad
	i = 0, \ldots, n - 1,
	\label{l2.3.2}
\end{equation}
has a global solution for all initial values.
\end{lemma}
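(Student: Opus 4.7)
The plan is to assume for contradiction that the maximal solution $v$ of~\eqref{l2.3.2} exists only on an interval $[0, T^*)$ with $T^* < \infty$, and to rule this out by trapping $v$ beneath a globally defined super-solution built from the first-order equation of Lemma~\ref{l2.2}. Since $h$ and all $b_i$ are positive, every $v^{(i)}$ is positive and increasing on $[0, T^*)$, and integrating $n$ times gives the Volterra representation
$$
v(t) = P(t) + \frac{1}{(n-1)!}\int_0^t (t-\tau)^{n-1} h(v(\tau))\, d\tau, \qquad P(t) = \sum_{j = 0}^{n - 1} \frac{b_j t^j}{j!}.
$$
A standard continuation argument shows it suffices to prove that $v$ remains bounded on $[0, T^*)$: then $h(v)$, and hence every $v^{(i)}$, would be bounded, allowing the solution to be continued past $T^*$.

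To build the super-solution, I would let $\alpha, \beta$ denote the constants furnished by Lemma~\ref{l2.2} and set $g(s) = h(s/\beta)/(\alpha(n-1)!)$, which is again a positive, continuous, non-decreasing function on $(0,\infty)$. Consider then the first-order Cauchy problem $(d/dt) U^{1/n}(t) = g^{1/n}(U)$, $U(0) = U_0$, with $U_0 > P(T^*)$. Separation of variables expresses the blow-up time of $U$ as a positive constant multiple of $\int_{U_0/\beta}^\infty h^{-1/n}(\sigma)\sigma^{1/n-1}\, d\sigma$, which is infinite by~\eqref{l2.3.1} together with Remark~\ref{r2.1}; hence $U$ is global. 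Applying Lemma~\ref{l2.2} to this particular $g$ then yields, by design,
$$
U(t) \ge U_0 + \alpha \int_0^t (t-\tau)^{n-1} g(\beta U(\tau))\, d\tau = U_0 + \frac{1}{(n-1)!}\int_0^t (t-\tau)^{n-1} h(U(\tau))\, d\tau,
$$
so that the Volterra kernel for $U$ matches exactly the one for $v$.

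Comparison then closes the argument. Let $T' = \sup\{t \in [0, T^*) : U > v \text{ on } [0, t]\}$; we have $T' > 0$ since $U(0) = U_0 > b_0 = v(0)$. On $[0, T']$ the monotonicity of $h$ gives $h(U) \ge h(v)$, and subtracting the two integral representations yields
$$
U(T') - v(T') \ge [U_0 - P(T')] + \frac{1}{(n-1)!}\int_0^{T'}(T' - \tau)^{n-1}\bigl[h(U(\tau)) - h(v(\tau))\bigr] d\tau > 0,
$$
since $U_0 > P(T^*) \ge P(T')$ strictly; continuity of $U - v$ then propagates strict inequality past $T'$, so $T' = T^*$. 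Therefore $v \le U(T^*) < \infty$ on $[0, T^*)$, contradicting the maximality of $T^*$.

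The only delicate point, and thus the main obstacle I expect, is the coordinated bookkeeping of the constants $\alpha$ and $\beta$ from Lemma~\ref{l2.2}. The choice $g(s) = h(s/\beta)/(\alpha(n-1)!)$ is forced by the requirement that $\alpha g(\beta U)$ reproduce the Volterra kernel $h(U)/(n-1)!$ of the $n$-th order problem, and I must simultaneously ensure that this rescaling $s \mapsto s/\beta$ does not destroy the global solvability of the associated first-order equation. That compatibility is precisely what Remark~\ref{r2.1} guarantees, so the two requirements fit together, and the rest of the argument is a routine continuation/comparison.
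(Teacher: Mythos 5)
Your proposal is correct, and it is built around exactly the same key device as the paper: the super-solution obtained from Lemma~\ref{l2.2} with the rescaled nonlinearity $g(s)=h(s/\beta)/(\alpha(n-1)!)$, whose global existence follows by separation of variables from~\eqref{l2.3.1} and Remark~\ref{r2.1}, and whose Volterra lower bound from Lemma~\ref{l2.2} reproduces precisely the kernel $h(\cdot)/(n-1)!$ of the $n$-th order problem. Where you diverge from the paper is in how existence of the actual solution is extracted once the super-solution is in hand. The paper is constructive: it runs the monotone Picard iteration $v_0=P$, $v_j = P + \frac{1}{(n-1)!}\int_0^t(t-\tau)^{n-1}h(v_{j-1})\,d\tau$, shows by induction that every iterate is dominated by the super-solution on $[0,T]$, and passes to the limit by monotone/dominated convergence; this needs neither Peano local existence nor Zorn's lemma inside the proof of the lemma. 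You instead take a non-extendable solution on $[0,T^*)$, trap it under the super-solution by a standard strict-comparison argument (which is sound: the margin $U_0 - P(T')>0$ does propagate the strict inequality up to $T^*$), and then rule out $T^*<\infty$ by the boundedness-implies-continuability argument. Your route reuses machinery the paper deploys anyway in the proof of Theorem~\ref{t2.1} (non-extendable solutions, continuation past a finite endpoint when all derivatives stay bounded), at the modest cost of invoking local existence for $v^{(n)}=h(v)$ up front; the paper's iteration avoids that but requires the small extra observation that the iterates form a monotone sequence. Both arguments are complete, and your bookkeeping of $\alpha$ and $\beta$ matches the paper's choice exactly.
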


\begin{proof}
We put
$$
	v_0 (t)
	=
	\sum_{i = 0}^{n - 1}
	\frac{
		b_i 
		t^i
	}{
		i!
	}
$$
and
\begin{equation}
	v_j (t)
	=
	\sum_{i = 0}^{n - 1}
	\frac{
		b_i 
		t^i
	}{
		i!
	}
	+
	\frac{
		1
	}{
		(n - 1)!
	}
	\int_0^t
	(t - \tau)^{n - 1}
	h (v_{j - 1})
	d \tau,
	\quad
	j = 1,2,\ldots.
	\label{pl2.3.1}
\end{equation}
It is easy to see that 
$$
	0 \le v_{j - 1} (t) \le v_j (t)
$$
for all $t \in [0, \infty)$, $j = 1, 2, \ldots$. 
Further, let $T > 0$ be some real number and let
\begin{equation}
	g (s)
	=
	\frac{
		1
	}{
		\alpha
		(n - 1)!
	}
	h 
	\left(
		\frac{s}{\beta}
	\right),
	\quad
	s \in (0, \infty),
	\label{pl2.3.2}
\end{equation}
where $\alpha > 0$ and $\beta > 0$ are the constants in Lemma~\ref{l2.2}.
We show that equation~\eqref{l2.2.1} has a global solution satisfying the initial condition 
\begin{equation}
	u (0) 
	= 
	\sum_{i = 0}^{n - 1}
	\frac{
		b_i 
		T^i
	}{
		i!
	}.
	\label{pl2.3.3}
\end{equation}
Indeed, let us denote $w = u^{1/n}$. Then~\eqref{l2.2.1} takes the form
$$
	\frac{
		d
	}{
		d t
	}
	w (t)
	= 
	g^{1 / n} (w^n).
$$
Integrating this equation, we obtain
$$
	\int_{
		w (0)
	}^{
		w (t)
	}
	g^{- 1 / n} (\zeta^n)
	d \zeta
	=
	t.
$$
After the changing of variables $s = \zeta^n$, this implies that
$$
	\frac{1}{n}
	\int_{
		u (0)
	}^{
		u (t)
	}
	g^{- 1 / n} (s)
	s^{1 / n - 1}
	ds
	=
	t
$$
for all $t \in (0, \infty)$. 
In view of condition~\eqref{l2.3.1} and Remark~\ref{r2.1}, the integral in the right-hand side of the last equality tends to infinity as $u (t) \to \infty$, which obviously guarantees that a global solution of~\eqref{l2.2.1}, \eqref{pl2.3.3} exists.
By Lemma~\ref{l2.2}, this solution satisfies inequality~\eqref{l2.2.2}, whence in accordance with~\eqref{pl2.3.2} and~\eqref{pl2.3.3} we have
$$
	u (t)
	\ge
	\sum_{i = 0}^{n - 1}
	\frac{
		b_i 
		t^i
	}{
		i!
	}
	+
	\frac{
		1
	}{
		(n - 1)!
	}
	\int_0^t
	(t - \tau)^{n - 1}
	h (u)
	d \tau
$$
for all $t \in [0, T]$.
Since $h$ is a positive non-decreasing function, one can verify by induction that
$$
	v_j (t) \le u (t)
$$
for all $t \in [0, T]$, $j = 0,1,2,\ldots$. Thus, there is a function $v \in L_1 ([0, T])$ such that $v_j (t) \to v (t)$ as $j \to \infty$ for all $t \in [0, T]$.
Passing to the limit as $j \to \infty$ in~\eqref{pl2.3.1},
by Lebesgue's dominated convergence theorem,
we obtain
$$
	v (t)
	=
	\sum_{i = 0}^{n - 1}
	\frac{
		b_i 
		t^i
	}{
		i!
	}
	+
	\frac{
		1
	}{
		(n - 1)!
	}
	\int_0^t
	(t - \tau)^{n - 1}
	h (v)
	d \tau
$$
for all $t \in [0, T]$.
Since $T > 0$ can be an arbitrary real number, the last equality is valid for all $t \in [0, \infty)$. 
In so doing, the limit function $v$ can obviously be defined on the whole interval $[0, \infty)$. Finally, to complete the proof, it remains to verify by direct differentiation that $v$ is a solution of~\eqref{l2.3.2}.
\end{proof}

\begin{lemma}\label{l2.4}
Let the conditions of Theorem~\ref{t2.1} be fulfilled.
Then any solution of the Cauchy problem
\begin{equation}
	v^{(m)} = q (t) h (v^{(k)}),
	\qquad
	v^{(i)} (0) = a_i \ge 0,
	\quad
	i = 0, \ldots, m - 1,
	\label{l2.1.1}
\end{equation}
can be extended to the whole interval $[0, \infty)$.
\end{lemma}

\begin{proof}
Let $v$ be a solution of~\eqref{l2.1.1} and, moreover, $[0, T)$ be the maximal interval to which this solution can be extended. 
Assume by contradiction that $T < \infty$.
We denote $u = v^{(k)}$ and $n = m - k$. It is easy to see that $u$ is a solution of the Cauchy problem
\begin{equation}
	u^{(n)} = q (t) h (u),
	\qquad
	u^{(i)} (0) = a_{k + i},
	\quad
	i = 0, \ldots, n - 1,
	\label{pl2.4.1}
\end{equation}
on the interval $[0, T)$. In so doing, the relation
\begin{equation}
	\lim_{t \to T - 0}
	u (t)
	=
	\infty
	\label{pl2.4.2}
\end{equation}
must be satisfied; otherwise, equation~\eqref{l2.1.1} implies that
$$
	\lim_{t \to T - 0}
	v^{(i)} (t)
	<
	\infty,
	\quad
	i = 0, \ldots, m - 1.
$$
Therefore, $v$ can be extended to the interval $[T, T + \varepsilon)$ for some $\varepsilon > 0$ by a solution of the Cauchy problem
$$
	v^{(n)} = q (t) h (v^{(k)}),
	\qquad
	v^{(i)} (T) 
	=
	\lim_{t \to T - 0}
	v^{(i)} (t),
	\quad
	i = 0, \ldots, m - 1.
$$
This contradicts the fact that $[0, T)$ is the maximal interval to which the solution $v$ can be extended.

Take a real number $t_0 \in [0, T)$ satisfying the condition $u (t_0) > 0$.
Let us consider an increasing sequence of real numbers $t_j \in (t_0, T)$ such that $u (t_j) = 2^j u (t_0)$, $j = 1, 2, \ldots$.
We also put $\tau_0 = 0$ and 
\begin{equation}
	\tau_j 
	= 
	\tau_{j - 1}
	+
	t_j
	-
	t_{j - 1} 
	+
	\varepsilon_j,
	\quad
	j = 1, 2, \ldots,
	\label{pl2.4.3}
\end{equation}
where
$$
	\varepsilon_j
	=
	\int_{
		t_{j - 1}
	}^{
		t_j
	}
	q (t)
	dt
	\ge
	0.
$$
Since $q$ is a locally integrable function, it is obvious that
\begin{equation}
	\lim_{j \to \infty}
	\tau_j
	<
	\infty.
	\label{pl2.4.4}
\end{equation}

Further, let us take some real numbers
$
	b_i > a_{k + i},
	\;
	i = 0, \ldots, n - 1,
$ 
and let $w$ be a global solution of the Cauchy problem
\begin{equation}
	w^{(n)} = h (2 w),
	\qquad
	w^{(i)} (0) = b_i,
	\quad
	i = 0, \ldots, n - 1.
	\label{pl2.4.5}
\end{equation}
According to Lemma~\ref{l2.3} and Remark~\ref{r2.1}, such a solution exists.
We prove that
\begin{equation}
	w^{(i)} (\tau_j)
	\ge
	u^{(i)} (t_j),
	\quad
	i = 0, \ldots, n - 1,
	\label{pl2.4.6}
\end{equation}
for all $j = 0, 1, \ldots$.
If $j = 0$, then~\eqref{pl2.4.6} follows from the choice of the real numbers $b_i$. Now, assume that~\eqref{pl2.4.6} is valid for $j = j_0$, where $j_0 \ge 0$ is some integer. Let us show that this relation is also valid for $j = j_0 + 1$. Taking into account equation~\eqref{pl2.4.1}, we obtain
\begin{align*}
	u^{(i)} (t_{j + 1})
	=
	{}
	&
	\sum_{l = 0}^{n - i - 1}
	\frac{
		u^{(i + l)} (t_j) 
		(t_{j + 1} - t_j)^l
	}{
		l!
	}
	\\
	&
	{}
	+
	\frac{
		1
	}{
		(n - i - 1)!
	}
	\int_{
		t_j
	}^{
		t_{j + 1}
	}
	(t_{j + 1} - \tau)^{n - i - 1}
	q (\tau)
	h (u)
	d \tau,
	\quad
	i = 0, \ldots, n - 1.
\end{align*}
Since
\begin{align*}
	\int_{
		t_j
	}^{
		t_{j + 1}
	}
	(t_{j + 1} - \tau)^{n - i - 1}
	q (\tau)
	h (u)
	d \tau
	&
	{}
	\le
	(t_{j + 1} - t_j)^{n - i - 1}
	h (u(t_{j + 1}))
	\int_{
		t_j
	}^{
		t_{j + 1}
	}
	q (\tau)
	d \tau
	\\
	&
	{}
	=
	(t_{j + 1} - t_j)^{n - i - 1}
	h (u(t_{j + 1}))
	\varepsilon_{j + 1},
\end{align*}
this implies that
\begin{align}
	u^{(i)} (t_{j + 1})
	\le
	{}
	&
	\sum_{l = 0}^{n - i - 1}
	\frac{
		u^{(i + l)} (t_j) 
		(t_{j + 1} - t_j)^l
	}{
		l!
	}
	\nonumber
	\\
	&
	{}
	+
	\frac{
		(t_{j + 1} - t_j)^{n - i - 1}
		h (
			u (t_{j + 1})
		)
		\varepsilon_{j + 1}
	}{
		(n - i - 1)!
	},
	\quad
	i = 0, \ldots, n - 1.
	\label{pl2.4.7}
\end{align}
In its turn, equation~\eqref{pl2.4.5} yields
\begin{align*}
	w^{(i)} (\tau_{j + 1})
	=
	{}
	&
	\sum_{l = 0}^{n - i - 1}
	\frac{
		w^{(i + l)} (\tau_j) 
		(\tau_{j + 1} - \tau_j)^l
	}{
		l!
	}
	\\
	&
	{}
	+
	\frac{
		1
	}{
		(n - i - 1)!
	}
	\int_{
		\tau_j
	}^{
		\tau_{j + 1}
	}
	(\tau_{j + 1} - \tau)^{n - i - 1}
	h (2 w)
	d \tau,
	\quad
	i = 0, \ldots, n - 1,
\end{align*}
whence in accordance with the estimate
$$
	\int_{
		\tau_j
	}^{
		\tau_{j + 1}
	}
	(\tau_{j + 1} - \tau)^{n - i - 1}
	h (2 w)
	d \tau
	\ge
	\frac{
		(\tau_{j + 1} - \tau_j)^{n - i}
		h (2 w(\tau_j))
	}{
		n - i
	}
$$
it follows that
\begin{align}
	w^{(i)} (\tau_{j + 1})
	\ge
	{}
	&
	\sum_{l = 0}^{n - i - 1}
	\frac{
		w^{(i + l)} (\tau_j) 
		(\tau_{j + 1} - \tau_j)^l
	}{
		l!
	}
	\nonumber
	\\
	&
	{}
	+
	\frac{
		(\tau_{j + 1} - \tau_j)^{n - i}
		h (2 w(\tau_j))
	}{
		(n - i)!
	},
	\quad
	i = 0, \ldots, n - 1.
	\label{pl2.4.8}
\end{align}
By the induction hypothesis and the definition of the real number $t_{j + 1}$, we have
$$
	h (2 w (\tau_j)) \ge h (2 u (t_j)) = h (u (t_{j + 1})).
$$
Moreover, taking into account~\eqref{pl2.4.3}, one can assert that
$$
	\tau_{j + 1} - \tau_j
	\ge
	t_{j + 1} - t_j
$$
and
$$
	(\tau_{j + 1} - \tau_j)^{n - i}
	\ge
	(n - i)
	(t_{j + 1} - t_j)^{n - i - 1}
	\varepsilon_{j + 1},
	\quad
	i = 0, \ldots, n - 1;
$$
therefore,
$$
	\sum_{l = 0}^{n - i - 1}
	\frac{
		w^{(i + l)} (\tau_j) 
		(\tau_{j + 1} - \tau_j)^l
	}{
		l!
	}
	\ge
	\sum_{l = 0}^{n - i - 1}
	\frac{
		u^{(i + l)} (t_j) 
		(t_{j + 1} - t_j)^l
	}{
		l!
	}
$$
and
$$
	\frac{
		(\tau_{j + 1} - \tau_j)^{n - i}
		h (2 w (\tau_j))
	}{
		(n - i)!
	}
	\ge
	\frac{
		(t_{j + 1} - t_j)^{n - i - 1}
		h (
			u (t_{j + 1})
		)
		\varepsilon_{j + 1}
	}{
		(n - i - 1)!
	},
	\quad
	i = 0, \ldots, n - 1.
$$
Hence, combining~\eqref{pl2.4.7} and~\eqref{pl2.4.8}, we obtain
$$
	w^{(i)} (\tau_{j + 1})
	\ge
	u^{(i)} (t_{j + 1}),
	\quad
	i = 0, \ldots, n - 1.
$$

From~\eqref{pl2.4.4}, it immediately follows that
$$
	\lim_{j \to \infty}
	w (\tau_j)
	<
	\infty;
$$
therefore,~\eqref{pl2.4.6} implies the inequality
$$
	\lim_{j \to \infty}
	u (t_j)
	<
	\infty.
$$
This contradicts~\eqref{pl2.4.2}. The proof is completed.
\end{proof}

\begin{proof}[Proof of Theorem~\ref{t2.1}]
Let $w$ be a solution of problem~\eqref{1.1}, \eqref{1.2} and let $[0, T)$ be the maximal interval to which $w$ can be extended. 
Assume by contradiction that $T < \infty$.
We denote by ${\mathbb V}$ the set of solutions of problem~\eqref{l2.1.1} such that $\operatorname{dom} v \subset [0, T)$ and, moreover,
\begin{equation}
	w^{(i)} (t) \le v^{(i)} (t),
	\quad
	i = 0, \ldots, m - 1,
	\label{pt2.1.1}
\end{equation}
for all $t \in \operatorname{dom} v$.
As is customary, by $\operatorname{dom} v$ we mean the domain of a solution $v$. 
Thus, for any $v \in {\mathbb V}$ we have $\operatorname{dom} v = [0, t_v)$, where $0 < t_v \le T$ is some real number.

It can be seen that ${\mathbb V} \ne \emptyset$. 
Indeed, integrating~\eqref{1.1}, \eqref{1.2}, we obtain
$$
	w (t)
	=
	\sum_{j = 0}^{m - 1}
	\frac{
		a_j
		t^j
	}{
		j!
	}
	+
	\frac{
		1
	}{
		(m - 1)!
	}
	\int_0^t
	(t - \tau)^{m - 1}
	f (\tau, w, \ldots, w^{(m - 1)})
	d \tau
$$
for all $t \in [0, T)$.
Let us put $v_0 = w$ and
\begin{equation}
	v_l (t)
	=
	\sum_{j = 0}^{m - 1}
	\frac{
		a_j
		t^j
	}{
		j!
	}
	+
	\frac{
		1
	}{
		(m - 1)!
	}
	\int_0^t
	(t - \tau)^{m - 1}
	q (\tau)
	h (v_{l-1}^{(k)})
	d \tau,
	\quad
	l = 1,2,\ldots.
	\label{pt2.1.2}
\end{equation}
By direct differentiation, it can be verified that
$$
	w^{(i)}(t)
	=
	\sum_{j = 0}^{m - i - 1}
	\frac{
		a_{i+j}
		t^j
	}{
		j!
	}
	+
	\frac{
		1
	}{
		(m - i - 1)!
	}
	\int_0^t
	(t - \tau)^{m - i - 1}
	f (\tau, w, \ldots, w^{(m - 1)})
	d \tau
$$
and
\begin{equation}
	v_l^{(i)} (t)
	=
	\sum_{j = 0}^{m - 1}
	\frac{
		a_{i + j}
		t^j
	}{
		j!
	}
	+
	\frac{
		1
	}{
		(m - i - 1)!
	}
	\int_0^t
	(t - \tau)^{m - i - 1}
	q (\tau)
	h (v_{l-1}^{(k)})
	d \tau,
	\;
	l = 1,2,\ldots,
	\label{pt2.1.3}
\end{equation}
for all $t \in [0, T)$, $i = 0, \ldots, m - 1$.

Taking into account~\eqref{t2.1.1} and the monotonicity of the function $h$, we have
\begin{equation}
	w^{(i)} (t) 
	\le
	v_{l-1}^{(i)} (t) 
	\le 
	v_l^{(i)} (t),
	\quad
	i = 0,\ldots, m - 1,
	\quad
	l = 1,2,\ldots,
	\label{pt2.1.4}
\end{equation}
for all $t \in [0, T)$.
At the same time, since $q$ is a locally integrable function, there exists $\varepsilon \in (0, T]$ such that $\{ v_l \}_{l=0}^\infty$ is a bounded sequence in the space $C^{m-1} (0, \varepsilon)$.
Therefore, the sequence $\{ v_l^{(k)} \}_{l=0}^\infty$ converges everywhere on $(0, \varepsilon)$ to some function from $L_\infty (0, \varepsilon)$.
In view of~\eqref{pt2.1.3}, this implies that $\{ v_l \}_{l=0}^\infty$ is a convergent sequence in the space $C^{m-1} (0, \varepsilon)$. Let us denote by $v$ the limit of this sequence.

In accordance with Lebesgue's dominated convergence theorem, passing in~\eqref{pt2.1.2} to the limit as $l \to \infty$, we obtain
$$
	v (t)
	=
	\sum_{j = 0}^{m - 1}
	\frac{
		a_j
		t^j
	}{
		j!
	}
	+
	\frac{
		1
	}{
		(m - 1)!
	}
	\int_0^t
	(t - \tau)^{m - 1}
	q (\tau)
	h (v^{(k)})
	d \tau
$$
for all $t \in [0, \varepsilon)$. Thus, the function $v$ is a solution of problem~\eqref{l2.1.1} on the interval $[0, \varepsilon)$. In view of~\eqref{pt2.1.4}, this function satisfies inequalities~\eqref{pt2.1.1} for all $t \in [0, \varepsilon)$.

Let us introduce a partial order relation on the set ${\mathbb V}$, assuming that $v_1 \preceq v_2$ if $\operatorname{dom} v_1 \subset \operatorname{dom} v_2$ and, in addition, $v_2$ coincides with $v_1$ on $\operatorname{dom} v_1$.
It is easy to see that any linearly ordered subset of ${\mathbb V}$ has a maximal element. Hence, according to Zorn's lemma, there exists a maximal element for the whole set ${\mathbb V}$. Let us denote this element by $v_*$ and its domain by $[0, t_*)$. 

Assume by contradiction that $t_* < T$. Since in accordance with Lemma~\ref{l2.4} any solution of problem~\eqref{l2.1.1} can be extended to the whole interval $[0, \infty)$, we have
\begin{equation}
	\lim_{t \to t_* - 0}
	v_*^{(i)} (t)
	<
	\infty,
	\quad
	i = 0, \ldots, m - 1.
	\label{pt2.1.6}
\end{equation}
Therefore, replacing relation~\eqref{pt2.1.2} by 
$$
	v_l (t)
	=
	\sum_{j = 0}^{m - 1}
	\frac{
		v_*^{(j)} (t_* - 0)
		(t - t_*)^j
	}{
		j!
	}
	+
	\frac{
		1
	}{
		(m - 1)!
	}
	\int_{t_*}^t
	(t - \tau)^{m - 1}
	q (\tau)
	h (v_{l-1}^{(k)})
	d \tau,
	\;
	l = 1,2,\ldots,
$$
and repeating the above argument,
we obtain for some sufficiently small real number $\varepsilon > 0$ a solution of the Cauchy problem
$$
	v^{(m)} = q (t) h (v^{(k)}),
	\qquad
	v^{(i)} (t_*) 
	= 
	\lim_{t \to t_* - 0}
	v_*^{(i)} (t),
	\quad
	i = 0, \ldots, m - 1,
$$
on the interval $[t_*, \varepsilon)$ satisfying inequalities~\eqref{pt2.1.1} for all $t \in [t_*, \varepsilon)$.
Extending $v_*$ by this solution to the interval $[0, t_* + \varepsilon)$, we arrive at a contradiction with the assumption that $v_*$ is the maximal element of the partial ordered set ${\mathbb V}$. 
Thus, $t_* = T$, whence in accordance with~\eqref{pt2.1.6} and the fact that
$$
	w^{(i)} (t) \le v_*^{(i)} (t),
	\quad
	i = 0, \ldots, m - 1,
$$
for all $t \in [0, t_*)$, we have
$$
	\lim_{t \to T - 0}
	w^{(i)} (t)
	<
	\infty,
	\quad
	i = 0, \ldots, m - 1.
$$
This, in turn, contradicts our assumption that $[0, T)$ is the maximal interval to which the solution $w$ can be extended.
\end{proof}

\begin{proof}[Proof of Theorem~\ref{t2.2}]
Since $f$ is a locally Caratheodory function, problem~\eqref{1.1}, \eqref{1.2} has a solution on the interval $[0, \varepsilon)$ for some real number $\varepsilon > 0$. By Theorem~\ref{t2.1}, this solution can be extended to the whole interval $[0, \infty)$.
\end{proof}

\bigskip

\noindent
{\bf Acknowledgments} 
This work
was supported by the Russian Ministry of Education and Science as part of the program of the Moscow Center for Fundamental and Applied Mathematics under the agreement 075-15-2022-284  (critical exponents),
and by the Russian Science Foundation, project 20-11-20272-$\Pi$ (estimates of global solutions).

\bigskip

\noindent
{\bf Data availability} Data sharing not applicable to this article as no datasets were generated or analyzed during the current study.


%
%

\end{document}